\numberwithin{equation}{section}
\theoremstyle{plain}
\newtheorem{thm}{Theorem}
\theoremstyle{definition}
\theoremstyle{remark}
\newtheorem{rem}{Remark}
\newcommand{\RR}{{\mathcal R}}
\newcommand{\R}{{\mathbb R}}
\newcommand{\rank}{{\mathrm{rank}}}
\begin{document}
\title{Solving the time-optimal control problem for nonlinear non-autonomous linearizable systems
\thanks{This work was financially supported by Polish National Science Centre grant no. 2017/25/B/ST1/01892.}}

\author{Katerina~V.~Sklyar\thanks{Institute of Mathematics, University of Szczecin, Wielkopolska str. 15, Szczecin 70-451, Poland (e-mail: jekatierina.sklyar@usz.edu.pl),} {\ }\thanks{Department of Automatics and Robotics, West  Pomeranian University of Technology in Szczecin,  Sikorsky str. 37, Szczecin 70-313, Poland.}, 
\
Svetlana~Yu.~Ignatovich\thanks{Department of Applied Mathematics, V.N. Karazin Kharkiv National University, Svobody sqr. 4, Kharkiv 61022, Ukraine (e-mail: ignatovich@ukr.net, s.ignatovich@karazin.ua).} 
}

\date{}

\maketitle

\begin{abstract} We present the conditions under which the time-optimal control problem for a nonlinear non-autonomous linearizable system can be solved by the method of successive approximations, at each step of which a power Markov moment min-problem is solved. The proposed method can be efficiently implemented by use of symbolic and numerical calculations.   

Keywords: Nonlinear control system, Linearizablity problem, Linear control system with analytic matrices, Method of successive approximations, power Markov moment min-problem with gaps.

MSC2020: 93B18, 93C10, 49K30, 49M99.
\end{abstract}

\section{Introduction}

The linearizability problem is an important issue for nonlinear control theory. For nonlinear systems that turn out to be linearizable, well-developed methods of linear control theory can be applied. In this paper, we propose a method for solving the time-optimal control problem for non-autonomous linearizable systems with a single input. 

Let us consider a control system 
\begin{equation}\label{system_nonlin_gen}
	\dot x=f(t,x,u), \ \ x\in\R^n, \ u\in\R^1,
\end{equation}
and suppose that $f\in C^1((-\delta,\delta)\times U(0)\times \R)$ where $\delta>0$ and $U(0)\subset\R^{n}$ is a neighborhood  of the origin. We say that the system (\ref{system_nonlin_gen}) is \textit{locally analytically linearizable at the origin} if there exists a neighborhood $(-\delta',\delta')\times U'(0)\subset(-\delta,\delta)\times U(0)$ and a local change of variables $z=F(t,x)\in C^2((-\delta',\delta')\times U'(0))$ such that the system in the new variables is linear, i.e., takes the form
\begin{equation}\label{system_lin_y}
	\dot z=A(t)z+b(t)u,
\end{equation}
where components of $A(t)$ and $b(t)$ are real analytic in $(-\delta',\delta')$. Here and below under ``local change of variables'' we mean a map $F(t,x)$ which takes the origin to itself and is locally invertible w.r.t.\ $x$, i.e., 
\begin{displaymath}
	F(t,0)\equiv 0, \ \ \det F_x(t,x)\ne0, \ \ (t,x)\in(-\delta',\delta')\times  U'(0),
\end{displaymath}
where the sub-index means the derivative in $x$, i.e., $F_x(t,x)=\frac{\partial F(t,x)}{\partial x}$. Clearly, this is true (maybe in a smaller neighborhood) if $\det F_x(0,0)\ne0$.  

The linearizability property can be used for solving the local controllability problem for the system (\ref{system_nonlin_gen}): for two given points $x(0)=x^0$, $x(T)=x^1$, find $z^0=F(0,x^0)$ and $z^1=F(T,x^1)$ and then find a control $u(t)$ which steers the linear system (\ref{system_lin_y}) from $z^0$ to $z^1$ in the time $T$; then this control steers the system (\ref{system_nonlin_gen}) from $x^0$ to $x^1$.  In this paper we propose a method for solving the time-optimal control problem under the constraint $|u(t)|\le1$. 

First of all, an efficient method of solving the linear time-optimal control problem should be involved. In Subsection~\ref{subsec2_1} we recall known results related to systems of the form (\ref{system_lin_y}), where $A(t)$ and $b(t)$ are real analytic in a neighborhood of zero. For start points from a neighborhood of the origin, an optimal control equals $\pm1$ and has no more than $n-1$ switchings. The direct substitution of such a control leads to a system of $n$ nonlinear equations with $n$ unknowns (switching times and the optimal time). However, under some conditions, the optimal control can be found by the method of successive approximations, at each step of which \textit{a power Markov moment min-problem with gaps} is solved. The power Markov moment problem was originated in \cite{M}, a deep discussion can be found in \cite{KN}. The statement of the Markov moment min-problem and its application to the time-optimal control problem was proposed in \cite{KS}, \cite{KS1}; in many cases it admits an effective solution.

Then, in Subsection~\ref{subsec2_2}, we recall some recent results on \textit{linearizability conditions for non-autonomous systems} proposed in \cite{K}. Additionally to linearizability conditions known since \cite{Krener}, \cite{JR} and generalized to systems of the class $C^1$ in \cite{SCL_2005}, \cite{CMA_2014}, in the non-autonomous case some new conditions arise, see \cite{K1} and \cite{K2} for further discussion. 

Finally, in Section~\ref{sec3} we combine the known results mentioned above and formulate the main result of the paper (Theorem~\ref{th3}), which gives a method for solving the time-optimal control problem for non-autonomous linearizable systems. This method can be effectively used for numerical application; we demonstrate it by an illustrative example in Section~\ref{sec4}.

\section{Background}

\subsection{Solving the time-optimal control problem for linear non-autonomous system}\label{subsec2_1} 

Consider a control system of the form
\begin{equation}\label{system_lin}
	\dot x=A(t)x+b(t)u,
\end{equation}
where the matrix $A(t)$ and the vector $b(t)$ are real analytic in a neighborhood of zero. Denote
\begin{displaymath}
	\textstyle L_i=\frac{1}{i!}\left(-A(t)+\frac{d}{dt}\right)^ib(t)|_{t=0}, \ \ i\ge0,
\end{displaymath}
and suppose 
\begin{equation}\label{cond1}
	\rank\{L_i\}_{i=0}^\infty=n. 
\end{equation}
Let $k_1,\ldots,k_n$ be the indices of the first $n$ linearly independent vectors from the sequence $\{L_i\}_{i=0}^\infty$. Denote 
\begin{displaymath}
	L=(-L_{k_1},\ldots,-L_{k_n}).
\end{displaymath}
The condition (\ref{cond1}) implies that the system (\ref{system_lin}) is locally controllable in a neighborhood of the origin. Let $u(t)$ be a control that steers a point $x^0$ from this neighborhood to the origin, i.e., 
\begin{displaymath}
	\dot x=A(t)x+b(t)u(t), \ \ x(0)=x^0, \ \ x(\theta)=0.
\end{displaymath} 
Then 
\begin{equation}\label{oper}
	x^0=-\int_0^\theta\Phi^{-1}(t)b(t)u(t)dt=-\sum_{j=0}^\infty L_j\int_0^\theta t^ju(t)dt,
\end{equation}
where the matrix $\Phi(t)$ is such that $\dot \Phi(t)=A(t)\Phi(t)$, $\Phi(0)=I$. This means that the right hand side of (\ref{oper}) is a series of \textit{power moments} of the function $u(t)$ with vector coefficients $L_j$. Equality (\ref{oper}) implies
\begin{equation}\label{oper1}
	(L^{-1}x^0)_i=\int_0^\theta t^{k_i}u(t)dt+\sum_{j=k_i+1}^\infty\alpha_{ji}\int_0^\theta t^{j}u(t)dt, \ \ i=1,\ldots,n,
\end{equation}  
where $\alpha_{ji}$ are components of the vector $-L^{-1}L_j$. Below we suppose $|u(t)|\le1$, then $|\int_0^\theta t^ju(t)dt|\le \frac{1}{j+1}\theta^{j+1}$. This means that locally, for small $\theta$, the first term in the right hand side of (\ref{oper1}) is a ``leading'' one. Having this in mind, we consider the \textit{power Markov moment min-problem with gaps} \cite{KS}, \cite{KS1}, \cite{KS_gap}
\begin{equation}\label{mom_gaps}
	y_i=\int_0^\theta t^{k_i}u(t)dt, \ \ i=1,\ldots,n, \ \ |u(t)|\le1, \ \ \theta\to\min.
\end{equation} 
As was shown in \cite{JMAA}, the solution $(\widehat\theta(x^0),\widehat u(t;x^0))$ of the time-optimal control problem 
\begin{equation}\label{tocp}
	\dot x=A(t)x+b(t)u, \ \ x(0)=x^0, \ x(\theta)=0, \ \ |u(t)|\le1, \ \ \theta\to\min
\end{equation}
and the solution $(\theta(y),u(t;y))$ of the power Markov moment min-problem (\ref{mom_gaps}) for ${y=L^{-1}x^0}$ are \textit{equivalent at the origin}, i.e.,
\begin{displaymath}
	\frac{\widehat\theta(x^0)}{\theta(L^{-1}x^0)}\to1, \ \ \frac{1}{\theta}\int_0^\theta|\widehat u(t;x^0)-u(t;L^{-1}x^0)|dt\to0 \ \ \mbox{ as } \ \ x^0\to0.
\end{displaymath} 

Under some additional conditions this result can be strengthened, namely, a fixed-point iteration can be used for finding the solution \cite{KS1}. In \cite{JMAA}, the following theorem was proved.
\begin{thm}
Consider the system (\ref{system_lin}) where $A(t)$ and $b(t)$ are real analytic in a neighborhood of zero and assume the condition (\ref{cond1}) holds. Suppose also that 
\begin{equation}\label{cond2}
	L_i=0 \ \ \mbox{for all} \ \ i<k_n \ \mbox{such that} \ i\ne k_j, \ j=1,\ldots,n-1.
\end{equation}
Then there exists a neighborhood $U(0)$ of the origin such for any $x^0\in U(0)$ the solution $(\widehat\theta(x^0),\widehat u(t;x^0))$ of the time-optimal control problem (\ref{tocp}) can be found as
\begin{equation}\label{s_app1}
	\widehat\theta(x^0)=\lim_{r\to\infty}\theta(y^r), \ \ \widehat u(t;x^0)=\lim_{r\to\infty}u(t;y^r),
\end{equation} 
where $(\theta(y),u(t;y))$ denotes the solution of the Markov moment min-problem (\ref{mom_gaps}) and the sequence $\{y^r\}_{r=0}^\infty$ is defined recursively as  
\begin{displaymath}
	y^0=L^{-1}x^0, \ \ y^{r+1}=L^{-1}\left(x^0+\int_0^{\theta(y_r)}\Phi^{-1}(t)b(t)u(t;y^r)dt\right)+y^r, \ \ r\ge0.
\end{displaymath} 
\end{thm}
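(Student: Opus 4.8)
The plan is to read the recursion as a Banach fixed-point iteration, to show that the underlying map is a contraction near the origin, and to identify its fixed point with the time-optimal solution by means of the equivalence recalled above. First I would simplify the recursion. Because $(\theta(y^r),u(t;y^r))$ solves the moment min-problem (\ref{mom_gaps}) with data $y^r$, its chosen-index moments are exactly $\int_0^{\theta(y^r)}t^{k_i}u(t;y^r)\,dt=y_i^r$. Expanding the integral appearing in the recursion by (\ref{oper}) as $\sum_{j\ge0}L_j\int_0^{\theta(y^r)}t^ju(t;y^r)\,dt$ and applying $L^{-1}$, the terms with $j=k_i$ give $L^{-1}L_{k_i}=-e_i$ and produce precisely $-y^r$, which cancels the trailing $+y^r$; writing $\alpha_j=-L^{-1}L_j$ for the remaining indices, the recursion collapses to $y^{r+1}=G(y^r)$ with
\[ G(y)=L^{-1}x^0-\sum_{j>k_n}\alpha_j\int_0^{\theta(y)}t^ju(t;y)\,dt. \]
Here the gap condition (\ref{cond2}) is decisive: it forces $L_j=0$, hence $\alpha_j=0$, for every non-chosen index $j<k_n$, so the only surviving corrections carry powers $t^j$ with $j>k_n$.

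Next I would characterize the fixed points of $G$. Retracing the computation shows that $y=G(y)$ holds if and only if $\int_0^{\theta(y)}\Phi^{-1}(t)b(t)u(t;y)\,dt=-x^0$; that is, the moment-optimal control $u(t;y)$ already steers $x^0$ to the origin in time $\theta(y)$, so it is admissible for (\ref{tocp}) and yields $\theta(y)\ge\widehat\theta(x^0)$. Conversely, (\ref{oper1}) together with (\ref{cond2}) shows that the chosen-index moments $\widehat y_i=\int_0^{\widehat\theta}t^{k_i}\widehat u\,dt$ of the time-optimal control obey $\widehat y=L^{-1}x^0-\sum_{j>k_n}\alpha_j\int_0^{\widehat\theta}t^j\widehat u\,dt$, the same algebraic relation that defines $G$. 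Thus both the iteration limit and the time-optimal solution satisfy a relation of the form $y=L^{-1}x^0-(\text{higher-order corrections})$, and the entire problem reduces to showing that they select the same point.

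The heart of the argument, and the step I expect to be the main obstacle, is to prove that $G$ is a contraction on a small ball about $L^{-1}x^0$ once $x^0$ is close enough to the origin. I would combine two facts from the moment-problem theory recalled above: the solution map $y\mapsto(\theta(y),u(t;y))$ is (Lipschitz) continuous in the data, and $\theta(y)\to0$ as $y\to0$ with $\theta(y)^{k_n+1}$ comparable to $\|y\|$. Since $\bigl|\int_0^{\theta(y)}t^ju(t;y)\,dt\bigr|\le\theta(y)^{j+1}/(j+1)$ and every surviving index satisfies $j>k_n$, each correction is of strictly higher order than the leading scale $\|y\|$; consequently the Lipschitz constant of $\sum_{j>k_n}\alpha_j\int_0^{\theta(y)}t^ju(t;y)\,dt$ on a ball of radius $\rho$ tends to $0$ as $\rho\to0$. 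The delicate point is to make this quantitative while respecting the anisotropic (weighted) scaling of the moment problem and the possible change of switching structure of $u(t;y)$; this is exactly where the full strength of (\ref{cond2}) is needed, since without it the corrections at gap indices $j<k_n$ would be comparable to $\|y\|$ and no contraction could hold. Granting the estimate, Banach's theorem gives a unique fixed point $y^*$, the convergence $y^r\to y^*$, and, by continuity of the solution map, $\theta(y^r)\to\theta(y^*)$ and $u(t;y^r)\to u(t;y^*)$.

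Finally I would identify $(\theta(y^*),u(t;y^*))$ with the time-optimal pair. The fixed point furnishes an admissible control for (\ref{tocp}) that is bang-bang with at most $n-1$ switchings, whence $\theta(y^*)\ge\widehat\theta(x^0)$. For the reverse inequality I would invoke the equivalence at the origin: as $x^0\to0$ the pair $(\theta(y^*),u(t;y^*))$ has vanishing relative time-defect and vanishing $L^1$-average control-defect from the genuine time-optimal pair, and the time-optimal control of the normal system (\ref{system_lin}) is unique; combining these forces $\theta(y^*)\le\widehat\theta(x^0)$, hence equality, and then $u(t;y^*)=\widehat u(t;x^0)$, for all $x^0$ in a sufficiently small neighborhood, which is (\ref{s_app1}). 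The two genuinely nontrivial ingredients are therefore the contraction estimate of the previous paragraph and the upgrade of the asymptotic equivalence to the exact identification of the fixed point.
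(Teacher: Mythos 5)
Your proposal follows essentially the same route as the paper, which presents this theorem as a result of \cite{JMAA} and justifies it only by noting that the recursion is the fixed-point iteration of the map $y\mapsto L^{-1}\bigl(x^0+\sum_{j\ne k_i}L_j\int_0^{\theta(y)}t^ju(t;y)\,dt\bigr)$, that condition (\ref{cond2}) makes this map a contraction near the origin (precisely because the surviving correction terms carry only powers $t^j$ with $j>k_n$), and that the fixed point yields the time-optimal pair. Your algebraic reduction of the recursion to this map, your reading of the fixed-point equation as admissibility of $u(t;y)$ for (\ref{tocp}), and the two steps you flag as nontrivial (the quantitative contraction estimate and the exact identification of the fixed point with the optimum) are exactly the content the paper delegates to \cite{JMAA}.
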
 

This result follows from the fact that under the condition (\ref{cond2}) the map 
\begin{displaymath}
	y\mapsto L^{-1}\left(x^0+\sum_{j\ne k_i}L_j\int_0^{\theta(y)}t^ju(t;y)dt\right)
\end{displaymath}
is a contraction in a neighborhood of the origin; if $\bar y$ is its fixed point, then $(\widehat\theta(x^0),\widehat u(t;x^0))=(\theta(\bar y),u(t;\bar y))$. 

In particular, if $k_i=i-1$, $i=1,\ldots,n$, then the condition (\ref{cond2}) is satisfied automatically. Moreover, in this case the moment problem (\ref{mom_gaps}) has no gaps, hence, it can be effectively and completely solved by the method described in \cite{KS}; see also \cite{KSI} for additional comments and examples. 

For the power Markov moment min-problem with gaps (\ref{mom_gaps}) of the general form, a deep study was obtained in \cite{KS_gap}. One particular case of even gaps was treated in \cite{KB}.  

\subsection{Conditions of linearizability for non-autonomous systems}\label{subsec2_2}

In \cite{K}, linearizability conditions for nonlinear non-autonomous control systems were given; further analysis can be found in \cite{K1}, \cite{K2}. In this subsection we formulate a direct corollary of these results related to a local statement of the problem. 

First, let us notice that if a system of the form (\ref{system_nonlin_gen}) is locally linearizable, then it is of the affine form 
\begin{equation}\label{system_nonlin_aff}
	\dot x=a(t,x)+b(t,x)u, \ \ a(t,0)\equiv0,
\end{equation}
where the condition $a(t,0)\equiv0$ means that the origin is an equilibrium of the system. Denote by $\RR$ the following operator which acts on a vector function $c(t,x)$ by the rule
\begin{displaymath}
	\RR c(t,x)=c_t(t,x)+c_x(t,x)a(t,x)-a_x(t,x)c(t,x),
\end{displaymath} 
where sub-indices $t$ and $x$ denote the derivatives w.r.t.\ $t$ and $x$ respectively. Introduce the following matrix  
\begin{displaymath}
	R(t,x)=(b(t,x),\RR b(t,x),\ldots,\RR^{n-1}b(t,x)).
\end{displaymath}
By $[\cdot,\cdot]$ we denote the Lie bracket, $[c(t,x),d(t,x)]=d_x(t,x)c(t,x)-c_x(t,x)d(t,x)$. Also we use the notation  $k^{\underline{j}}$ for the \textit{falling factorial},  
\begin{displaymath}
	k^{\underline{j}}=k(k-1)\cdots(k-j+1), \ j\ge1, \ \ \ k^{\underline{0}}=1.
\end{displaymath} 

\begin{thm}\label{th2}
	Consider an affine non-autonomous control system of the form (\ref{system_nonlin_aff}), where $a(t,x)\in C^2((-\delta,\delta)\times U(0))$, $b(t,x)\in C^1((-\delta,\delta)\times U(0))$. This system is locally linearizable if and only is all vector functions $\RR^i b(t,x)$ for $1\le i\le n$ exist and belong to the class $C^1((-\delta,\delta)\times U(0))$ and the following conditions are satisfied,
	\begin{enumerate}
		\item $[\RR^ib(t,x),\RR^jb(t,x)]=0$ for $0\le i<j\le n-1$, $(t,x)\in (-\delta,\delta)\times U(0)$;
		\item $\rank R(t,x)=n$ for $t\in(-\delta,0)\cup(0,\delta)$ and $x\in U(0)$;
		\item the vector function $R^{-1}(t,x)\RR^nb(t,x)$ depends only on $t$, i.e., 
		\begin{displaymath}
			R^{-1}(t,x)\RR^nb(t,x)=\gamma(t);
		\end{displaymath}
		\item components of $\gamma(t)$ are analytic or meromorphic functions in a neighborhood of the point $t=0$ with a pole at $t=0$ such that
		\begin{displaymath}\gamma_i(t)=\sum_{j=-n+i-1}^\infty \gamma_{i,j}t^j, \ \ i=1,\ldots,n,
		\end{displaymath}
			the indicial equation 
			\begin{displaymath}k^{\underline{n}}-\sum_{s=1}^nk^{\underline{n-s}}\gamma_{n-s+1,-s}=0
			\end{displaymath} 
			has $n$ integer nonnegative roots $0\le k_1<\cdots<k_n$	and
			\begin{equation}\label{cond_4}
				\rank \begin{pmatrix}
					V_{k_1+1,k_1}&V_{k_1+1,k_1+1}&0&\cdots&0\\
					\cdots&\cdots&\cdots&\cdots&\cdots\\
					V_{k_n,k_1}&V_{k_n,k_1+1}&V_{k_n,k_1+2}&\cdots&V_{k_n,k_n-1}
				\end{pmatrix}=k_n-k_1-n+1,
			\end{equation}
			where
			\begin{displaymath}
				\begin{array}{l}
					\displaystyle V_{k,k}=k^{\underline{n}}-\sum_{s=1}^nk^{\underline{n-s}}\gamma_{n-s+1,-s},\\
					\displaystyle V_{k,j}=-\sum_{s=1}^n	j^{\underline{n-s}}\gamma_{n-s+1,k-j-s}, \ \ j\le k-1.			
				\end{array}
			\end{displaymath}
	\end{enumerate}
\end{thm}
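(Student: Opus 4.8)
The plan is to regard conditions 1--4 as a diffeomorphism-invariant reformulation of the trivial linear structure of the target system, and to treat the analytic behaviour at $t=0$ separately by a Frobenius-type analysis. Writing the operator as $\RR c=c_t+[a,c]$ (in the sign convention $[a,c]=c_xa-a_xc$ of the paper), the first observation is that $\RR$ is covariant under a time-dependent change of variables $z=F(t,x)$: if $c$ pushes forward to $\tilde c=F_xc$ and the drift $a$ becomes $\tilde a$, then $\RR c$ pushes forward to $\tilde\RR\tilde c$, where $\tilde\RR$ is the operator of the transformed system. Hence $\RR^i b$ pushes forward to $\tilde\RR^i\tilde b$, and every one of the conditions 1--4 is invariant under local changes of variables. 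For the linear target $\dot z=A(t)z+b(t)u$ one has $\tilde a=A(t)z$, so $\tilde a_x=A(t)$, and on vector fields depending on $t$ alone $\tilde\RR$ reduces to $-A(t)+\frac{d}{dt}$; in particular $\tilde\RR^i b(t)=\left(-A(t)+\frac{d}{dt}\right)^i b(t)$ are spatially constant, matching the $L_i$ of Subsection~\ref{subsec2_1}. I would establish this covariance by a direct computation, which is the one routine step.

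For necessity, I verify conditions 1--4 on the linear model and pull them back. Since the $\tilde\RR^i b$ depend on $t$ only, their Lie brackets vanish, giving condition 1; condition (\ref{cond1}), the controllability of the linear model, gives $\rank R=n$ for $t\ne0$, i.e.\ condition 2; and since $\tilde\RR^n b$ is a $t$-dependent linear combination of $\tilde\RR^0 b,\ldots,\tilde\RR^{n-1}b$, the vector $R^{-1}\tilde\RR^n b$ depends only on $t$, which is condition 3. Condition 4 is the analytic fingerprint of the singularity: expressing $\left(-A+\frac{d}{dt}\right)^n b$ through the lower columns produces $\gamma(t)$, whose pole structure and indicial data encode exactly the controllability indices $k_1<\cdots<k_n$ and the analyticity of $A(t),b(t)$ at $t=0$. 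By invariance, all four conditions then hold for the original system.

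For sufficiency, given conditions 1--4 I construct $F$. Condition 1 makes the distributions spanned by $b,\RR b,\ldots,\RR^{j}b$ involutive, so Frobenius yields straightening coordinates; condition 2 guarantees that the resulting map is a local diffeomorphism (its Jacobian is essentially $R$, which is nonsingular off $t=0$). In these coordinates the system takes a companion form whose last row is governed by condition 3, i.e.\ by the coefficients $\gamma(t)$; this already linearizes the system, but only to a form $\dot z=\tilde A(t)z+\tilde b(t)u$ whose matrix $\tilde A(t)$ is meromorphic, inheriting the pole of $\gamma$ at $t=0$.

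The main obstacle is the last step: removing this singularity by an analytic, invertible gauge change $z\mapsto P(t)z$ so that the final $A(t),b(t)$ are genuinely analytic at $t=0$. This is precisely a regular-singular-point (Fuchsian) problem. Substituting a power ansatz $z\sim t^k$ into the singular linear ODE produces the indicial equation $k^{\underline n}-\sum_{s=1}^n k^{\underline{n-s}}\gamma_{n-s+1,-s}=0$, and the Frobenius recursion is controlled by the matrix with entries $V_{k,j}$; its solvability without logarithmic (resonant) terms is exactly the rank condition (\ref{cond_4}), while the requirement that all exponents be nonnegative integers $0\le k_1<\cdots<k_n$ guarantees that the $n$ independent Frobenius solutions assemble into an analytic, invertible $P(t)$. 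Carrying out this normalization, checking that it yields analytic $A(t),b(t)$, and verifying that the indices $k_i$ coincide with those of Subsection~\ref{subsec2_1} is the delicate part; it is where the full force of the results of \cite{K} is used.
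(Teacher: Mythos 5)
First, a caveat about the comparison itself: the paper contains no proof of Theorem~\ref{th2}. It is introduced explicitly as ``a direct corollary'' of the results of \cite{K}, \cite{K1}, \cite{K2}, so there is no in-paper argument to measure your attempt against. Judged on its own, your outline has the right architecture, and it does match what the cited references do: the covariance claim is correct ($\RR c$ is the Lie bracket $[\partial_t+a\cdot\partial_x,\,c\cdot\partial_x]$ in the extended $(t,x)$-space, so $\RR^ib$ pushes forward to $\tilde\RR^i\tilde b$ and conditions 1--4 are invariant under time-dependent changes of variables), on the linear target $\tilde\RR$ does reduce to $-A(t)+\frac{d}{dt}$ on spatially constant fields, matching the $L_i$, and the singular behaviour at $t=0$ is indeed a Fuchsian (regular singular point) problem governed by the indicial equation and the coefficients $V_{k,j}$.

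As a proof, however, the proposal has two concrete gaps. In the necessity direction, conditions 1--3 are genuinely verified on the linear model, but condition 4 is only asserted (``the analytic fingerprint of the singularity''): what must actually be shown is that the columns of the driftless form $g(t)$ are $n$ linearly independent \emph{analytic} solutions of $w^{(n)}=\sum_{k}\gamma_k(t)w^{(k-1)}$, and that the existence of an $n$-dimensional space of analytic solutions of this singular equation forces the indicial roots to be $n$ distinct nonnegative integers and forces the rank condition (\ref{cond_4}) (otherwise logarithmic terms appear or the formal solution space has dimension less than $n$). In the sufficiency direction, the crux is misstated: the Frobenius solutions do \emph{not} assemble into ``an analytic, invertible $P(t)$'' --- the Wronskian-type matrix $G(t)=(g,\dot g,\ldots,g^{(n-1)})$ is analytic but singular at $t=0$ whenever $k_n>n-1$, exactly as $R(t,x)$ is (condition 2 only gives full rank for $t\ne0$). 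The change of variables is defined by $F_x=G(t)R^{-1}(t,x)$, and the real difficulty is to prove that this product of a degenerating matrix with the inverse of a degenerating matrix extends across $t=0$ to an invertible $C^1$ matrix, and that the resulting $F$ is $C^2$ and globally defined on a neighbourhood. Your sketch acknowledges this is ``where the full force of the results of \cite{K} is used,'' which is honest, but it means the decisive step is deferred rather than proved; in that respect the proposal is an accurate road map of the argument in \cite{K} rather than a self-contained proof.
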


\begin{rem}
	Conditions 1 and 2 of Theorem~\ref{th3} are analogous to linearizability conditions for autonomous systems as well as the requirements $\RR^kb(t,x)\in C^1((-\delta,\delta)\times U(0))$, $k=0,\ldots,n$ \cite{SCL_2005}. Conditions 3 and 4 are specific for non-autonomous case \cite{K}, \cite{K2}. Condition $a(t,x)\in C^2((-\delta,\delta)\times U(0))$ is of technical character, see \cite{K1} for a detailed discussion.
\end{rem}

\begin{rem}
	If the system (\ref{system_nonlin_aff}) is linearizable, its linear representation (\ref{system_lin_y}) is not unique. It is convenient to choose it in a \textit{driftless form} 	 
	\begin{equation}\label{dr_form}
		\dot z=g(t)u,
	\end{equation}
which can be considered as a canonical form for linear systems suitable both for autonomous and non-autonomous cases. Components of $g(t)$ can be found as $n$ linearly independent real analytic solutions of the differential equation
	\begin{equation}\label{dif_eq}
		w^{(n)}=\sum_{k=1}^n\gamma_{k}(t)w^{(k-1)},
	\end{equation}
where $w^{(j)}$ denotes the $j$-th derivative in $t$. If the analytic solving of the differential equation (\ref{dif_eq}) is impossible, one can find sufficiently many coefficients of the Taylor series for a solution using the recurrent formula 
 \begin{equation}\label{rec_form}
 	w_k=-\frac{1}{V_{k,k}}\sum_{j=0}^{k-1}V_{k,j}w_j, \ \ k\ge0, \ k\ne k_i, \ i=1,\ldots,n,
 \end{equation}
where $w_{k_1},\ldots,w_{k_n}$ are arbitrary. 

It is convenient to choose $g_i(t)$ such that $g_i(t)=-t^{k_i}+o(t^{k_n})$; in this case $L=I$. When using (\ref{rec_form}), one should choose $w_{k_i}=-1$ and  $w_{k_j}=0$ for $j\ne i$.
 \end{rem}

\begin{rem} A change of variables $F(t,x)$ satisfies the following partial differential equations
	\begin{displaymath}
			F_t(t,x)+F_x(t,x)a(t,x)=0, \ F_x(t,x)\RR^k b(t,x)=g^{(k)}(t), \ \ k\ge0.
	\end{displaymath} 
It is more convenient to find it as a solution of the system  
\begin{equation}\label{dif_eq_F}
		F_x(t,x)=G(t)R^{-1}(t,x), \ F_t(t,x)=-F_x(t,x)a(t,x),
\end{equation}
where $G(t)=(g(t),\dot g(t),\ldots,g^{(n-1)}(t))$; see also Remark~\ref{rem1} below.
\end{rem}

\section{Main result}\label{sec3}

Now we combine the theorems formulated in the previous section and present our main result.  

\begin{thm}\label{th3}
	Suppose that the system (\ref{system_nonlin_aff}) satisfies the conditions of Theorem~\ref{th2} and, additionally, 
	\begin{equation}\label{cond_5}
	V_{\ell,k_i}=0 \ \ \mbox{ for } \ \ \ell = k_i+1,\ldots,k_n, \ \ i=1,\ldots,n-1.	
	\end{equation} 
Then there exist $\delta'>0$, a neighborhood $U'(0)$ of the origin and a local change of variables $z=F(t,x)\in C^2((-\delta',\delta')\times U'(0))$ such that for any $x^0\in U'(0)$ the solution $(\widehat\theta(x^0),\widehat u(t;x^0))$ of the time-optimal control problem 
\begin{displaymath}
	\dot x=a(t,x)+b(t,x)u, \ x(0)=x^0, \ x(\theta)=0, \ |u(t)|\le1, \ \theta\to\min
\end{displaymath} 
can be found by the method of successive approximations as (\ref{s_app1}), where 
\begin{equation}\label{s_app4}
	y^0=L^{-1}F(0,x^0), \ \  y^{r+1}=L^{-1}\left(F(0,x^0)+\int_0^{\theta(y_r)}g(t)u(t;y^r)dt\right)+y^r, \ \ r\ge0.
\end{equation}
Here components of $g(t)$ are $n$ linearly independent analytic solutions of the differential equation (\ref{dif_eq}), $L=(-\frac{1}{k_1!}g^{(k_1)}(0),\ldots,-\frac{1}{k_n!}g^{(k_n)}(0))$, the vector function $F(t,x)$ satisfies the system of differential equations (\ref{dif_eq_F}) and $F(0,0)=0$. 
\end{thm}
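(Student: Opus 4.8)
The plan is to reduce Theorem~\ref{th3} to the linear result recalled in Subsection~\ref{subsec2_1} by passing through the linearizing change of variables; the entire weight of the extra hypothesis (\ref{cond_5}) will be carried by a single step, namely showing that (\ref{cond_5}) forces the gap condition (\ref{cond2}) for the linearized system. First I would invoke Theorem~\ref{th2}: its hypotheses guarantee local linearizability, so on some $(-\delta',\delta')\times U'(0)$ there is a $C^2$ map $F$, obtained by integrating (\ref{dif_eq_F}) with $F(0,0)=0$, taking the system to the driftless form (\ref{dr_form}) with $g$ a vector of $n$ linearly independent analytic solutions of (\ref{dif_eq}). Since $A\equiv0$ and $b=g$ are analytic, the theorem of Subsection~\ref{subsec2_1} applies to $\dot z=g(t)u$ as soon as (\ref{cond1}) and (\ref{cond2}) are verified for it.

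For this system $L_\ell=\tfrac{1}{\ell!}g^{(\ell)}(0)$, so the $i$-th entry of $L_\ell$ is the $\ell$-th Taylor coefficient of $g_i$. The key preliminary observation is that vanishing of the vector $L_\ell$ is unaffected by a constant change of basis $g\mapsto Mg$, so I may test (\ref{cond2}) on the normalized basis $g_i(t)=-t^{k_i}+o(t^{k_n})$, for which $L=I$ and (\ref{cond1}) is immediate. It then suffices to prove that, for each $i\le n-1$, the normalized $g_i$ has vanishing Taylor coefficients at every index $\ell$ with $k_i<\ell\le k_n$, and this is exactly where (\ref{cond_5}) enters. The main obstacle, and really the only substantive computation, is this claim; I would prove it by induction on $\ell$ via the recurrence (\ref{rec_form}). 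For the $i$-th normalized solution one has $w_j=0$ for $j<k_i$, $w_{k_i}=-1$, and $w_{k_m}=0$ at the remaining roots $k_m$ with $m\ne i$ by the normalization; hence for a non-root index $\ell$ with $k_i<\ell\le k_n$ the inductive hypothesis annihilates every intermediate term in (\ref{rec_form}) and leaves only $w_\ell=V_{\ell,k_i}/V_{\ell,\ell}$, which is zero by (\ref{cond_5}). Running $\ell$ up to $k_n$ gives the claim, whence every gap index $\ell<k_n$ yields $L_\ell=0$ (its entries vanish for $k_i>\ell$ trivially and for $k_i<\ell$ by the claim), i.e.\ (\ref{cond2}) holds.

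With (\ref{cond1}) and (\ref{cond2}) in hand, the theorem of Subsection~\ref{subsec2_1} solves the linear time-optimal problem of steering $z^0=F(0,x^0)$ to the origin by the scheme (\ref{s_app1}); specializing its iteration to $\Phi\equiv I$, $b=g$ reproduces exactly the recursion (\ref{s_app4}) with $L=(-\tfrac{1}{k_1!}g^{(k_1)}(0),\ldots,-\tfrac{1}{k_n!}g^{(k_n)}(0))$. Finally I would transfer the conclusion back through $F$: since $F(t,\cdot)$ is for every $t$ a diffeomorphism fixing the origin and $F$ annihilates the drift via (\ref{dif_eq_F}), it carries admissible trajectories of the nonlinear system onto those of $\dot z=g(t)u$ without changing the control or the elapsed time. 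Thus steering $x^0$ to $0$ in time $\theta$ with $|u|\le1$ is equivalent to steering $F(0,x^0)$ to $0$ in the same time with the same control, so the two time-optimal pairs coincide and $(\widehat\theta(x^0),\widehat u(t;x^0))$ is furnished by (\ref{s_app1})--(\ref{s_app4}), as claimed. The decisive point remains the reduction of the algebraic relations (\ref{cond_5}) to the analytic gap condition (\ref{cond2}) through the indicial recurrence.
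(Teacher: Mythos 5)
Your proposal is correct and follows essentially the same route as the paper's own (very terse) proof: reduce to the linear theorem of Subsection~\ref{subsec2_1} via the linearizing map $F$, and show that (\ref{cond_5}) forces the gap condition (\ref{cond2}) by running the recurrence (\ref{rec_form}) on the normalized solutions $g_i(t)=-t^{k_i}+o(t^{k_n})$. You merely spell out the induction that the paper dismisses as ``easily follows,'' which is a faithful elaboration rather than a different argument.
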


\begin{proof}
	We notice that the condition (\ref{cond_5}) obviously implies (\ref{cond_4}). 
	
	To prove the theorem, it is sufficient to show that (\ref{cond_5}) implies (\ref{cond2}). However, it easily follows from the recurrent formula (\ref{rec_form}).
\end{proof}

\begin{rem}\label{rem1}
	To apply Theorem~\ref{th3}, it is not necessary to solve the system (\ref{dif_eq_F}). In fact, we only need to find $F(0,x^0)$, so we can proceed as follows. Denote $M(x)=G(t)R^{-1}(t,x)|_{t=0}$ and,  for any $k=1,\ldots,n$, consider the equations 
\begin{displaymath}
	\frac{\partial F_k(0,x)}{\partial x_s}=M_{ks}(x), \ \ s=1,\ldots,n.
\end{displaymath}	
Successively for $s=1,\ldots,n$, solve (at least, numerically) the Cauchy problem for a single ordinary differential equation 
\begin{displaymath}
	z'(\tau)=M_{ks}(0,x_1^0,\ldots,x^0_{s-1},\tau,0,\ldots,0), \ \ z(0)=F_k(0,x_1^0,\ldots,x^0_{s-1},0,\ldots,0),
\end{displaymath}
then $F_k(0,x_1^0,\ldots,x^0_{s-1},x^0_s,0,\ldots,0)=z(x_s^0)$. We find $F_k(0,x^0)$ after $n$ such steps. 
\end{rem}

\section{Example}\label{sec4}
As an illustrative example, we consider the following system
\begin{equation}\label{Ex1_nonlin}
	\begin{array}{l}
		\dot x_1=u,\\
		\dot x_2=(t-{\textstyle \frac13} t^4-2x_1x_3-(2t^2+t^3+{\textstyle \frac15}t^5)x_1^2)u, \\
		\dot x_3=(t^3+{\textstyle \frac15}t^3-t^2)u-2tx_1.
	\end{array}
\end{equation}
Let us check the conditions of Theorem~\ref{th3}. We have
\begin{displaymath}
	a(t,x)=\begin{pmatrix}
		0\\0\\-2tx_1
	\end{pmatrix}, \ \ b(t,x)=\begin{pmatrix}
	1\\ t-{\textstyle \frac13} t^4-2x_1x_3-(2t^2+t^3+{\textstyle \frac15}t^5)x_1^2\\ t^3+{\textstyle \frac15}t^5-t^2
\end{pmatrix},
\end{displaymath}
\begin{displaymath}
	\RR b(t,x)=\begin{pmatrix}
		0\\1-{\textstyle \frac43}t^3-(3t^2+t^4)x_1^2\\3t^2+t^4
	\end{pmatrix}, \ \ \RR^2b(t,x)=\begin{pmatrix}
		0\\ -4t^2-(6t+4t^3)x_1^2\\ 6t+4t^3
	\end{pmatrix},
\end{displaymath}
\begin{displaymath}
	R(t,x) = (b(t,x),\RR b(t,x),\RR^2b(t,x)), \ \ \ \RR^3b(t,x)=\begin{pmatrix}
		0\\-8t-(6+12t^2)x_1^2\\ 6+12t^2
	\end{pmatrix}.
\end{displaymath}
First, we notice that $\det R(t,x)=6t+4t^3+4t^4-{\textstyle\frac43}t^6$, hence, $R(t,x)$ is nonsingular at points $(t,x)$ from some neighborhood of the origin such that $t\ne0$. Then, we find that $[b(t,x),\RR b(t,x)]=[b(t,x),\RR^2 b(t,x)]=[\RR b(t,x),\RR^2 b(t,x)]=0$, therefore, conditions 1 and 2 of Theorem~\ref{th2} hold. 

Further, 
\begin{displaymath}
	R^{-1}(t,x)\RR^3b(t,x)=\begin{pmatrix}
		0\\[2pt]
		\frac{12t(2t^2-3)}{9+6t^2+6t^3-2t^5}\\[6pt]
		\frac{3(3+6t^2+8t^3-4t^5)}{t(9+6t^2+6t^3-2t^5)}
	\end{pmatrix}=\begin{pmatrix}
		\gamma_1(t)\\\gamma_2(t)\\ \gamma_3(t)
	\end{pmatrix},
\end{displaymath}
where $\gamma_i(t)$ are analytic or meromorphic and $\gamma_1(t)=0$, $\gamma_2(t)=-4t+O(t^3)$, $\gamma_3(t)=\frac{1}{t}+O(t)$. Hence, the indicial equation is  
\begin{displaymath}
	k^{\underline{3}}-k^{\underline{2}}\gamma_{3,-1}-k^{\underline{1}}\gamma_{2,-2}-k^{\underline{0}}\gamma_{1,-3}=0;
\end{displaymath}	
since $\gamma_{1,-3}=0$, $\gamma_{2,-2}=0$, $\gamma_{3,-1}=1$, it takes the form
\begin{displaymath}
		 k(k-1)(k-2)-k(k-1)=0
\end{displaymath}
and has the roots $k_1=0$, $k_2=1$, $k_3=3$. Now,
\begin{displaymath}
	V_{1,0}=-\gamma_{1,-2}=0, \ \ V_{2,0}=-\gamma_{1,-1}=0, \ \ V_{3,0}=-\gamma_{1,0}=0,
\end{displaymath}
\begin{displaymath}
	V_{2,1}=-\gamma_{2,-1}-\gamma_{1,-2}=0, \ V_{3,1}=-\gamma_{2,0}-\gamma_{1,-1}=0,
\end{displaymath}
hence, conditions 3 and 4 of Theorem~\ref{th2} and the condition (\ref{cond_5}) are satisfied. Hence, the system (\ref{Ex1_nonlin}) is locally linearizable and the time-optimal control problem for this system can be solved by the method of successive approximations. 

We find a system after linearization in the driftless form (\ref{dr_form}), where the components of $g(t)$ are solutions of the following differential equation
\begin{displaymath}
	w'''=\gamma_1(t)w+\gamma_2(t)w'+\gamma_3(t)w''.
\end{displaymath} 
In our case, we have
\begin{displaymath}
	t(9+6t^2+6t^3-2t^5)w'''=12t^2(2t^2-3)w'+3(3+6t^2+8t^3-4t^5)w'',
\end{displaymath}
and it is easy to check that $g_1(t)=-1$, $g_2(t)=-t+\frac13t^4$ and $g_3(t)=-t^3-\frac15t^5$ are its three linearly independent solutions. Hence, $L=I$ and in the new coordinates the system takes the driftless form 
\begin{displaymath}
	\begin{array}{l}
		\dot z_1=-u, \\ \dot z_2=-(t-{\textstyle \frac13} t^4)u, \\ \dot z_3=-(t^3+{\textstyle \frac15} t^5)u.
	\end{array}
\end{displaymath}
In this case, the power Markov moment min-problem (\ref{mom_gaps}) is of the form
\begin{equation}\label{mom_pr_Ex}
	y_1=\int_0^\theta u(t)dt, \  y_2=\int_0^\theta t\,u(t)dt,  \ y_3=\int_0^\theta t^3u(t)dt, \ \ |u(t)|\le1, \ \theta\to\min.
\end{equation}
Its solution can be found directly. In fact, the optimal control is unique and equals $\pm1$ and has no more than two switchings. 
Since the set of points for which it has \textit{less} than two switchings is of zero measure, for numerical float-point calculation it is sufficient to assume that there are \textit{exactly} two switchings; denote them by $t_1$ and $t_2$, and let $\theta$ be the optimal time. Then
\begin{displaymath}
	\begin{array}{l}
		\pm y_1=-2t_1+2t_2-\theta,\\
		\pm y_2=-t_1^2+t_2^2-\frac12\theta^2,\\
		\pm y_3=-\frac12t_1^4+\frac12t_2^4-\frac14\theta^4,
	\end{array}
\end{displaymath}  
where the upper (resp., lower) sign means that $u(t)$ equals $+1$ (resp, $-1$) on the first and the third intervals of constancy. Let us denote $c_1^\pm=\frac12(\pm y_1+\theta)$, $c_2^\pm=\pm y_2+\frac12\theta^2$, $c_3=2(\pm y_3+\frac14\theta^4)$, then 
\begin{displaymath}
	t_2-t_1=c_1^\pm, \ \ t_2^2-t_1^2=c_2^\pm, \ \ t_2^4-t_1^4=c_3^\pm.
\end{displaymath}
Excluding $t_1$ and $t_2$, we get two equations w.r.t. $\theta$ 
\begin{displaymath}
	2c_3^\pm (c_1^\pm)^2=(c_2^\pm)^3+c_2^\pm(c_1^\pm)^4;
\end{displaymath}
they are polynomial equations in $\theta$ of degree 6. There exists a unique root $\theta$ of one of these equations such that $0< t_1< t_2<\theta$. Hence, it it easy to find the solution of (\ref{mom_pr_Ex}) numerically for any $y$.

Finally, let us find $F(0,x^0)$. We have
\begin{displaymath}
	M(x)=G(t)R^{-1}(t,x)=\begin{pmatrix}
		-1&0&0\\
		-2x_1x_3&-1&-x_1^2\\
		0&0&-1
	\end{pmatrix}.
\end{displaymath} 
Hence, $\frac{\partial F_1(0,x)}{\partial x_1}=-1$, $\frac{\partial F_1(0,x)}{\partial x_2}=\frac{\partial F_1(0,x)}{\partial x_3}=0$, therefore, $F_1(0,x^0)=-x_1^0$. Analogously, $F_3(0,x^0)=-x^0_3$. For $F_2(0,x)$, we have  $\frac{\partial F_2(0,x)}{\partial x_1}=-2x_1x_3$, $\frac{\partial F_1(0,x)}{\partial x_2}=-1$, $\frac{\partial F_1(0,x)}{\partial x_3}=-x_1^2$. In this particular case the solution is obvious. However, we demonstrate the method of finding $F_2(0,x^0)$ described in Remark~\ref{rem1}.

Since $F(0,0,0,0)=0$, solving the Cauchy problem $z'(\tau)=F_2(0,0,0,0)=0$, $z(0)=0$ we get $z(x_1^0)=F_2(0,x^0_1,0,0)=0$. Then, considering the Cauchy problem $z'(\tau)=-1$, $z(0)=0$, we get $z(x_2^0)=F_2(0,x_1^0,x_2^0,0)=-x_2^0$. Finally, solving the Cauchy problem $z'(\tau)=-(x_1^0)^2$, $z(0)=-x_2^0$, we get $z(x_3^0)=F_2(0,x^0)=-x_2^0-(x_1^0)^2x_3^0$. As a result, $F(0,x^0)=(-x_1^0,-x_2^0-(x_1^0)^2x_3^0,-x_3^0)$. 

Suppose we solve the time-optimal control problem for the system (\ref{Ex1_nonlin}) from the point $x^0=(-0.4,-0.2,0.1)$, then $y^0=F(0,x^0)=(0.4,0.184,-0.1)$. Using the method of successive approximations we get  $\lim\limits_{r\to\infty} y^r\approx(0.4,0.1457,-0.0714)$ and $t_1\approx0.1251$, $t_2\approx0.8740$, $\theta\approx 1.0978$. After 45 iterations one achieves $\|y^{r+1}-y^r\|<10^{-8}$; the trajectory components are shown in Fig.~\ref{fig1}~(a).

\begin{figure}
	\begin{minipage}{\textwidth}
		\includegraphics[width=0.49\textwidth]{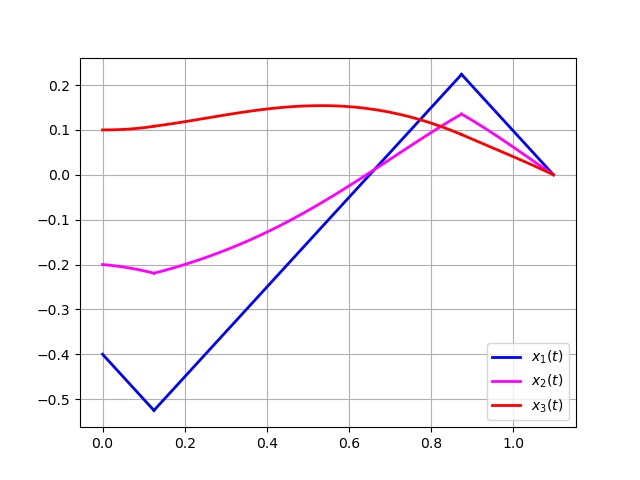}
		\hfill
		\includegraphics[width=0.49\textwidth]{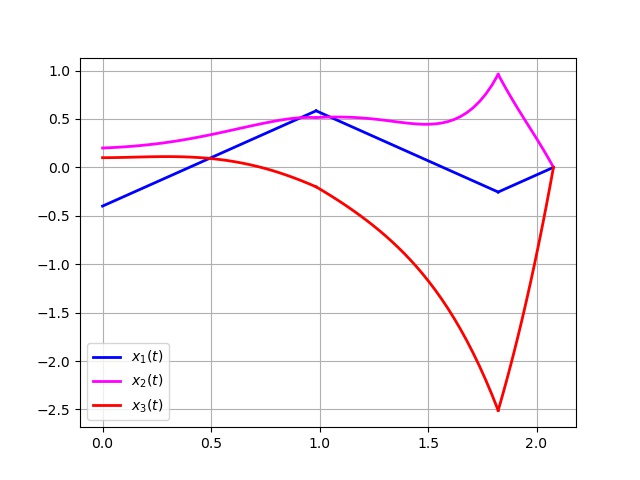}\\
		\hspace*{0.2\textwidth} (a) \hspace*{0.5\textwidth} (b) 
	\end{minipage}
	\caption{Components of the optimal trajectory: (a)~${x^0=(-0.4,-0.2,0.1)}$; (b)~${x^0=(-0.4,0.2,0.1)}$.
	}\label{fig1}
\end{figure}

If the starting point for the initial system is $x^0=(-0.4,0.2,0.1)$, we get $y^0=(0.4,-0.216,-0.1)$ and the method of successive approximations diverges. However, one can apply the following modification: instead of (\ref{s_app4}), use the formula
\begin{displaymath}
	y^0=L^{-1}F(0,x^0), \ \  y^{r+1}=cL^{-1}\left(F(0,x^0)+\int_0^{\theta(y_r)}g(t)u(t;y^r)dt\right)+y^r, \ \ r\ge0,
\end{displaymath}
where $c\in(0,1)$ (recall that in our example $L=I$). One can show that the mapping leading to this recursive formula is also a contraction. Though the contraction constant is greater, a domain where the method converges can be wider. So, in the previous example, if $c=\frac14$, then $y^r$ converges; after 120 iterations one achieves $\|y^{r+1}-y^r\|<10^{-8}$. We obtain $t_1\approx1.8232$, $t_2\approx2.0779$, $\theta\approx 0.9843$; the trajectory components are shown in Fig.~\ref{fig1}~(b).

\end{document}